\theoremstyle{plain}
\newtheorem{thm}{Theorem}[section]
\newtheorem{prop}[thm]{Proposition}
\newtheorem{cor}[thm]{Corollary}
\theoremstyle{definition}\newtheorem{defn}[thm]{Definition}
\newtheorem{rmk}[thm]{Remark}
\newtheorem{ques}[thm]{Question}
\newtheorem{step}{Step}
\numberwithin{equation}{section}
\renewcommand{\theta}{\vartheta}
\renewcommand{\phi}{\varphi}
\renewcommand{\epsilon}{\varepsilon}
\renewcommand{\subset}{\subseteq}
\newcommand{\N}{\mathbb N}
\newcommand{\Z}{\mathbb Z}
\newcommand{\R}{\mathbb R}
\newcommand{\C}{\mathbb C}
\newcommand{\pol}[1]{\langle #1\rangle}
\newcommand{\fps}[1]{\langle\langle #1\rangle\rangle}
\newcommand{\cf}[2]{\mr{cf}(#1; #2)}
\newcommand{\mr}[1]{\mathrm{#1}}
\newcommand{\mc}[1]{\mathcal{#1}}
\begin{document}

\title[Computing Moments of Polynomials in Semicirculars in PPtime]{Pseudo-Polynomial Time Algorithm for Computing Moments of polynomials in Free Semicircular Elements}
\author{Rei Mizuta}
\address{Graduate School of Mathematics\\University of Tokyo\\Komaba, Tokyo 153-8914, Japan}
\keywords{Free Probability, Cram\'er's Theorem, Jungen's Theorem}
\email{\href{mailto:}{rmizuta@ms.u-tokyo.ac.jp}}
\date{\today}

\begin{abstract}
We consider about calculating $M$th moments of a given polynomial in free independent semicircular elements in free probability theory. By a naive approach, this calculation requires exponential time with respect to $M$. We explicitly give an algorithm for calculating them in polynomial time by rearranging Sch\"utzenberger's algorithm.
\end{abstract}

\maketitle

\tableofcontents

\section{Introduction} \label{introduction}
\noindent Let $(\Omega,\mathcal{F},\mathbb{P})$ be a probability space, and $X,Y\in L^1(\Omega)$ be two independent $\R$-valued random variables whose means are zero. Cram\'er's theorem \cite{cramer} states that the sum of these two random variables follows a normal distribution if and only if both of $X$ and $Y$ are so.

On the other hand, it is known that this theorem does not have counterparts in free probability theory \cite{BV95} and there is an attempt to obtain the same result as this theorem in free probability in fixed Wigner chaos \cite{BB13}. We continue this attempt for polynomials in free independent semicircular elements, so our setting is as follows.

\begin{ques}
  Let $s_1,s_2$ be two free independent standard semicircular elements, and $p(X),q(X)$ be two polynomials of one variable such that both of them are not constant. Does $p(s_1)+q(s_2)\sim S(0,1)$ imply $p(s_1)\sim S(0,\sigma_1^2)$ and $q(s_1)\sim S(0,\sigma_2^2)$ for some $\sigma_1,\sigma_2>0$?
\end{ques}

Here $a\sim S(0,1)$ means that the spectral density of $a$ is equivalent to the semicircular density \eqref{semicircular density} for any operator $a$ and $a\sim S(0,\sigma^2)$ means $a/\sigma \sim S(0,1)$ for any positive real number $\sigma$. The above problem is generalized as follows.

\begin{ques} \label{pol_identification}
  Let $s_1,s_2,...,s_n$ be free independent standard semicircular elements and

  \noindent
  $p(X_1,X_2,...,X_n)$ be a non-commutative polynomial of $n$ variables. If $p(s_1,s_2,..,s_n)\sim S(0,1)$, is $p$ $\R$-linear?
\end{ques}

When we try to solve this problem for a given $p(X_1,X_2,...,X_n)$, the folowing strategy is available: calculating $p(s_1,s_2,...,s_n)$'s moments and comparing them with that of a standard semicircular element. Therefore the following subtask is important.

\begin{ques} \label{subtask}
  In the setting of Question \ref{pol_identification}, can we calculate $M$-th moment of

  \noindent
  $p(s_1,s_2,...,s_n)$ in practical time?
\end{ques}

While doing a naive calculation, expanding $M$-th power to $(m_p)^M$ monomials and taking summation of expectation of them, where $m_p$ is the number of monomials which appear in $p$, the computational time costs exponential time with respect to $M$. We give a practical tool for this subtask, Question \ref{subtask}, by giving an algorithm which calculates the output of Question \ref{subtask} in polynomial time with respect to $M$ by using Sch\"utzenberger's algorithm \cite{Sc62}.

In this paper, we introduce some related work about Question \ref{pol_identification} and operator algebraic research which uses Jungen's theorem in Chapter \ref{related work}. In Chapter \ref{preliminaries}, we prepare some preliminaries about free probability theory and Jungen's theorem. Finally, we show our algorithm in Chapter \ref{algorithm}.

\section{Related Work} \label{related work}

\noindent In this section, we firstly introduce some related work about Question \ref{pol_identification} in the previous chapter, secondly introduce some conventional work which uses Sch\"utzenberger's work\cite{Sc62} for operator algebras.

\subsection{Polynomial Identification Problem}

In this subsection, we introduce some conventional work which gives a partial solution for the Problem \ref{pol_identification} which is defined in the previous chapter. We mention a result in the setting of fixed Wigner chaos \cite{KNPS12}.

\begin{thm}[{\cite[Corollary 1.7]{KNPS12}}] \label{fixed chaos}
  Let $m\geq 2$ be a positive integer and $f\in L^2(\R_{+}^{m})$ be a mirror-symmetric function. Then the fourth cumulant of Wigner integral $I_m(f)$ is positive unless $f=0$ a.e.
\end{thm}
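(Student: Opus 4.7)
The plan is to write the fourth free cumulant of $I_m(f)$ as a sum of squared $L^2$ norms of nested contractions of $f$ with itself, identify the two boundary terms that cancel exactly the quadratic subtraction appearing in the cumulant formula, and then single out one interior contraction whose vanishing forces $f=0$. Mirror symmetry enters twice: first to invoke the Wigner isometry, then to rewrite the deepest interior contraction as a Gram kernel.

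The first step uses the free Wigner product formula: for a mirror-symmetric kernel $f$,
\begin{equation}
I_m(f)\, I_m(f) = \sum_{k=0}^{m} I_{2m-2k}(f \frown_k f),
\end{equation}
where $f \frown_k f$ denotes the $k$-th nested (non-crossing) contraction of $f$ with itself. Taking the trace of $(I_m(f)^2)^2$, expanding both factors by this formula, and using the Wigner isometry $\tau(I_n(g) I_n(h)) = \langle g, h^\ast\rangle_{L^2(\R_+^n)}$ together with orthogonality of chaoses of distinct orders yields
\begin{equation}
\tau\bigl(I_m(f)^4\bigr) = \sum_{k=0}^{m} \bigl\|f \frown_k f\bigr\|_{L^2(\R_+^{2m-2k})}^2.
\end{equation}

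Next I would isolate the boundary terms $k=0$ and $k=m$. The $k=0$ contribution is $\|f \otimes f\|^2 = \|f\|^4$; mirror symmetry reduces $f \frown_m f$ to the scalar $\|f\|^2$, contributing another $\|f\|^4$. Since $\tau(I_m(f)^2) = f \frown_m f = \|f\|^2$, these two boundary terms together cancel exactly the correction $2\tau(I_m(f)^2)^2$ in $\kappa_4(X) = \tau(X^4) - 2\tau(X^2)^2$, leaving
\begin{equation}
\kappa_4\bigl(I_m(f)\bigr) = \sum_{k=1}^{m-1} \bigl\|f \frown_k f\bigr\|_{L^2(\R_+^{2m-2k})}^2 \geq 0.
\end{equation}

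The main obstacle is strict positivity, since a priori all $m-1 \geq 1$ interior terms could vanish simultaneously. I would target the deepest interior term $k = m-1$, which is a function of two variables. Mirror symmetry gives
\begin{equation}
(f \frown_{m-1} f)(t_1, t_2) = \int_{\R_+^{m-1}} f(t_1, s_1,\ldots, s_{m-1})\, \overline{f(t_2, s_1,\ldots, s_{m-1})}\, ds_1 \cdots ds_{m-1},
\end{equation}
which is precisely the integral kernel of $GG^\ast$, where $G \colon L^2(\R_+^{m-1}) \to L^2(\R_+)$ is the Hilbert--Schmidt operator with kernel $f$. Hence $\|f \frown_{m-1} f\|^2 = \|GG^\ast\|_{\mathrm{HS}}^2 = \sum_n \sigma_n(G)^4$, and its vanishing forces all singular values of $G$ to be zero, hence $G = 0$, hence $f = 0$ almost everywhere. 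The only technical points are tracking the contraction indices carefully and using the correct (Hermitian) form of mirror symmetry; once these are handled, the Hilbert--Schmidt argument delivers strict positivity.
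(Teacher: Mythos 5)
The paper states this theorem as a quoted result from \cite[Corollary 1.7]{KNPS12} and does not reproduce a proof, so there is no in-paper argument to match your proposal against; judged on its own terms, your argument is correct and, as far as I can tell, tracks the proof in [KNPS12] itself. The free Wigner product formula gives $I_m(f)^2=\sum_{k=0}^m I_{2m-2k}(f\frown_k f)$, and squaring, taking the trace, and using orthogonality of Wigner chaoses of distinct orders yields $\tau(I_m(f)^4)=\sum_{k=0}^m\|f\frown_k f\|^2$ \emph{provided} one first checks that mirror symmetry of $f$ is inherited by each $f\frown_k f$, so that the isometry $\tau(I_n(g)^2)=\langle g,g^\ast\rangle$ really collapses to $\|g\|^2$; you gesture at this (``the correct Hermitian form of mirror symmetry'') but it is the one step that deserves to be written out rather than alluded to. Once that is in place, the $k=0$ and $k=m$ boundary terms each contribute $\|f\|^4$ and exactly cancel $2\tau(I_m(f)^2)^2$ in $\kappa_4(X)=\tau(X^4)-2\tau(X^2)^2$ (valid since $\tau(I_m(f))=0$), leaving $\kappa_4(I_m(f))=\sum_{k=1}^{m-1}\|f\frown_k f\|^2\geq 0$ with a nonempty range because $m\geq 2$. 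Your strict-positivity step is the essential point and it is sound: mirror symmetry rewrites $f\frown_{m-1}f$ as the kernel of $GG^\ast$ for the Hilbert--Schmidt operator $G\colon L^2(\R_+^{m-1})\to L^2(\R_+)$ with kernel $f$, so $\|f\frown_{m-1}f\|_{L^2}=\|GG^\ast\|_{\mathrm{HS}}$, and vanishing of this forces $G=0$ and hence $f=0$ a.e. I see no gap.
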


The Wigner integral $I_m(f)$ is defined in {\cite[Definition 5.3.1]{BS98}}, also mirror-symmetric is defined in {\cite[Definition 1.19]{KNPS12}} where this condition is equivalent to self-adjointness of $I_m(f)$.

Let $(T_k)_{k\geq 0}\subset \R [X]$ be the Chebyshev polynomial of second type {\cite[Chapter 5.1]{BS98}} and we define an operator $s_k$ as $s_k:=I_1(1_{[k-1,k]})$ for each positive integer $k$. Since $T_m(s_k)=I_m(1_{[k -1,k]}^{\otimes m})$ holds by an argument in the proof of {\cite[Theorem 5.3.4]{BS98}},
\begin{align}
  T_{k_1}(s_{i_i})\cdots\cdots T_{k_N}(s_{i_N})&=I_{k_1}(1_{[i_1 -1,i_1]}^{\otimes k_1})\cdots\cdots I_{k_N}(1_{[i_N -1,i_N]}^{\otimes k_N})\\
  &=I_m(1_{[i_1 -1,i_1]}^{\otimes k_1}\otimes 1_{[i_2 -1,i_2]}^{\otimes k_2} \otimes \cdots\cdots \otimes 1_{[i_N -1,i_N]}^{\otimes k_N})
\end{align}
holds for any positive integer $m$ and $k$ by the product formula of Wigner chaoses {\cite[Proposition 5.3.3]{BS98}}. Then following Corollary holds.

\begin{cor} \label{fixed chaos for pol_identification}
  Let $\C\pol{X_1,X_2,...,X_n}_{\mr{s.a.}}$ is the collection of all self-adjoint non-commutative polynomials and $m\geq 2$ be a positive integer and

  \begin{multline}
    p\in \mr{span}_{\R}\{T_{k_1}(X_{i_i})...T_{k_N}(X_{i_N})\mid N\in\N,1\leq i_1,i_2,...,i_N,k_1,k_2,...,k_N\leq n \\
    \textrm{ such that } k_1+k_2+...+k_N=m, i_j\neq i_{j+1} \textrm{ for }j=1,2,...,N-1\}\cap \C\pol{X_1,X_2,...,X_n}_{\mr{s.a}}
  \end{multline}
  be a non-commutative polynomial. Then $p(s_1,s_2,...,s_n)$ is not a standard semicircular element.
\end{cor}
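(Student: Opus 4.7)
The plan is to reduce the claim immediately to Theorem \ref{fixed chaos}. By the product formula identity displayed just above the corollary, each basis element
$T_{k_1}(X_{i_1})\cdots T_{k_N}(X_{i_N})$ evaluated at $(s_1,\ldots,s_n)$, with $k_1+\cdots+k_N=m$ and $i_j\neq i_{j+1}$, is equal to $I_m$ applied to the tensor product
$1_{[i_1-1,i_1]}^{\otimes k_1}\otimes\cdots\otimes 1_{[i_N-1,i_N]}^{\otimes k_N}\in L^2(\R_+^m;\R)$. Hence by $\R$-linearity one can write $p(s_1,\ldots,s_n)=I_m(f)$, where $f\in L^2(\R_+^m;\R)$ is the corresponding $\R$-linear combination of these indicator tensor products; in particular, $p(s_1,\ldots,s_n)$ lies in the $m$-th Wigner chaos. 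The condition $i_j\neq i_{j+1}$ is precisely what prevents the product formula from producing contractions that would lower the chaos order, so no mixing with lower chaoses occurs.

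Next, since $p(s_1,\ldots,s_n)$ is self-adjoint by hypothesis, $I_m(f)$ is self-adjoint, and hence $f$ is mirror-symmetric, using the equivalence recorded just after Theorem \ref{fixed chaos}. I would then split into two cases. If $f=0$ almost everywhere, then $p(s_1,\ldots,s_n)=I_m(0)=0$, which is trivially not a standard semicircular element. Otherwise, Theorem \ref{fixed chaos} applies directly and yields that the fourth free cumulant of $I_m(f)$ is strictly positive. Since a standard semicircular element has every free cumulant of order different from $2$ equal to zero, such strict positivity is incompatible with $p(s_1,\ldots,s_n)$ being standard semicircular, and the two cases together give the conclusion.

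No step presents a real obstacle; the corollary is essentially a combinatorial repackaging of Theorem \ref{fixed chaos} via the Chebyshev--Wigner dictionary. The only points worth verifying carefully when writing this up are that the product formula genuinely identifies $p(s_1,\ldots,s_n)$ with the stated $I_m(f)$ (already carried out in the display of the excerpt), and that the defining semicircularity condition forces the fourth free cumulant to vanish. If one wishes to strengthen the conclusion to say that $p(s_1,\ldots,s_n)$ is not semicircular of \emph{any} variance, the same argument applies verbatim, since every semicircular law has vanishing fourth free cumulant.
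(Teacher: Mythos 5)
Your argument is correct and coincides with the paper's own reasoning: the display immediately before the corollary already identifies $p(s_1,\dots,s_n)$ with a single $m$-th order Wigner integral $I_m(f)$ via the Chebyshev--Wigner dictionary and the product formula (non-crossing contractions vanish because $i_j\neq i_{j+1}$ makes consecutive indicator kernels orthogonal), and the conclusion then follows from Theorem~\ref{fixed chaos} exactly as you describe. The only additions you make---observing that self-adjointness of $I_m(f)$ gives mirror symmetry of $f$, handling the degenerate case $f=0$, and recalling that a standard semicircular element has vanishing fourth free cumulant---are details the paper leaves implicit, and they are all correct.
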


However, the positivity of the fourth cumulant fails for linear combination of different chaoses. For example
\begin{align}
  \kappa_{4}(I_3(1_{[0,1]}^{\otimes 3})-2I_1(1_{[0,1]}))&=\kappa_{4}(s_1^3-3s_1)\\
  &=-2\\
  &< 0
\end{align}
  holds. So we cannot extend this argument to a general polynomial for solving Question \ref{pol_identification}.

\subsection{Sch\"utzenberger's Work in Operator Algebra}

We also remark on the work of conventional work which uses Sch\"utzenberger's work\cite{Sc62} for the region of operator algebras. In \cite{S03}, Sauer proves the rationality of Novikov-Shubin invariants in $\Z G$ if $G$ is a virtually free group. In \cite{SS15}, Shlyakhtenko and Skoufranis prove the non-atomicness of spectral distribution of polynomials in free independent semicircular elements.

We remark that these pieces of conventional research use only the existence of a proper algebraic system of a certain operator, which is defined in Definition \ref{rat_alg}, so they do not focus on the algorithm for obtaining a proper algebraic system which is suggested in \cite{Sc62}.  

\section{Preliminaries} \label{preliminaries}

\noindent We begin brief preliminaries on free probability theory and Jungen's theorem.

\subsection{Free Probability}

In this subsection, we prepare a background about free probability theory. For any von Neumann algebra $\mc{M}$, we denote the collection of all self-adjoint operators in $\mc{M}$ by $\mc{M}_{\mr{s.a.}}$.

\begin{defn} \label{def of free}
  Let $\mathcal{M}$ be a von Neumann algebra and $\tau:\mathcal{M}\rightarrow \C$ be a faithful normal tracial state. The pair $(\mathcal{M},\tau)$ is called \emph{$W^*$-probability space}.

  For any $a\in\mc{M}_{\mr{s.a.}}$, we define \emph{spectral distribution} of $a$ as the unique probability distribution $\mu$ such that
  \begin{align}
    \tau(a^m)=\int_{\R}x^md\mu(x)
  \end{align}
  for any positive integer $m$, and denote it by $\mu_{a}$.

  We call operators $x_1,x_2,...,x_n\in \mc{M}_{\mr{s.a.}}$ are \emph{free independent} if $\tau(p_1(x_{i_1})p_2(x_{i_2})...p_N(x_{i_N}))=0$ for all positive integer $N,1\leq i_1,i_2,...,i_N\leq n$ and $p_1,p_2,...,p_N\in\C[X]$ such that $\tau(p_j(x_{i_j}))=0$ for any $1\leq j\leq N$ and $i_k\neq i_{k+1}$ for all $1\leq k \leq N-1$.
\end{defn}

\begin{defn} \label{def of semicircular}
  Let $(\mathcal{M},\tau)$ be a $W^*$-probability space, an operator $s\in\mathcal{M}_{\mr{s.a.}}$ is called a \emph{standard semicircular element} if its moments are given by
  \begin{align}
    \tau(s^m)=\begin{cases}0 & m: \textrm{odd} \\ \frac{1}{m/2+1} {}_{m}C_{m/2} & m: \textrm{even}.\end{cases} \label{moments of semicircular}
  \end{align}
\end{defn}

\begin{rmk}
  In the setting of Definition \ref{def of free}, $s\in\mc{M}_{\mr{s.a.}}$ is a standard semicircular element if and only if
  
  \begin{align}
    d\mu_{s} = \frac{\sqrt{4-x^2}}{2\pi}1_{[-2,2]}(x)dx. \label{semicircular density}
  \end{align}
  
  We denote the above condition by $s\sim S(0,1)$, and we call the probability density function in right hand side of \eqref{semicircular density} the \emph{semicircular density}.
\end{rmk}

\begin{rmk} \label{uniqueness of moment}
  By above Definition \ref{def of free}, $\tau(p(x_1,x_2,...,x_n))$ is uniquely determined by the moments of $(x_i)_{1\leq i\leq n}$. In particular, by the arguments in {\cite[Chapter 1]{MS17}},
  \begin{align}
    \tau(x^M)=\sum_{\pi\in NC(M)}\kappa_{\pi}(x) \label{partition}
  \end{align}
  holds for any $x\in\mc{M}_{\mr{s.a.}}$, where $NC(M)$ means the collection of all non-crossing partition of $\{1,2,...,M\}$ {\cite[Chapter 1.8]{MS17}} and $\kappa_{\pi}$ is multiplication of cumulants of $x$ which is defined in {\cite[Chapter 2.2, Definition 8]{MS17}}.

  However, if $x$ in the left hand side of \eqref{partition} takes a polynomial in free independent operators, a summand of the right hand side of \eqref{partition} becomes  $m^M$ numbers of multiplications of cumulants for a fixed $\pi\in NC(M)$ where $m:=\#\pi$ is the number of block in $\pi$. So it takes exponential time complexity with respect to $M$ for computing \eqref{partition}, while we expand the right hand side of \eqref{partition} as multiplications of cumulants of monomial appeared in powers of $x$ and take summation of them.
\end{rmk}

\begin{rmk} \label{existness of semicirculars}
  For any positive integer $n\geq 2$, there is a $W^*$-probability space which has $n$ free independent standard semicircular elements. Let $\mathbb{F}_n$ be the free group of rank $n$. Then the free group factor $\mathcal{L}(\mathbb{F}_n)$ is defined as the weak closure of the image of the left regular representation in $B(l_2(\mathbb{F}_n))$ and has the unique faithful normal trace $\tau$. Then for each $1\leq i\leq n$, there exists a standard semicircular elements $s_i\in (\lambda_{a_i})''\subset\mathcal{L}(\mathbb{F}_n)$ ({\cite[Chapter 6]{MS17}}) and hence $s_1,s_2,...,s_n$ are free independent, where $a_1,...,a_n\in \mathbb{F}_n$ are the generators of free group and $\lambda_{g}$ is the left regular representation of $g\in \mathbb{F}_n$.
\end{rmk}

\begin{comment}

\begin{thm}[] \label{cramer}
  Let $(\Omega,\mathcal{F},\mathbb{P})$ be a probability space. Assume $X,Y\in L^1(\Omega)$ are two independent random variables. Then $X$ and $Y$ follow two normal distribution if and only if $X+Y$ follows a normal distribution. 
\end{thm}

However, in free probability, this theorem does not have the counterpart.

\begin{thm}[\cite{BV95}] \label{free cramer}
  There are two probability measures $\mu$ and $\nu$ whose supports are compact such that $\mu\boxplus\nu$ is the semicircular distribution but Either $\mu$ or $\nu$ are not semicircular distribution.
\end{thm}

\end{comment}

\subsection{Jungen's Theorem}

In this subsection, we give a preliminary on Sch\"utzenberger's work about Jungen's theorem \cite{Sc62}.

Let $R$ be a unital ring (possibly non-commutative), $X=\{X_1,X_2,...,X_n\}$ be a finite set and $F(X)$ be the free monoid generated by $X$. We denote the free $R$-algebra generated by $F(X)$ by $R\pol{X}$. We also denote the $R$-coefficients formal power series generated by $F(X)$ by $R\fps{X}$. We consider $R\pol{X}$ as a subring of $R\fps{X}$ by the natural inclusion. For any $F\in F(X)$ and $p\in R\fps{X}$, we denote the coefficient of $F$ in $p$ by $\cf{p}{F}$. For any $p\in R\pol{X}$ and $r_1,r_2,...,r_n\in R$, $p(r_1,r_2,...,r_n)\in R$ means the substitution of $X_1,X_2,...,X_n$ respectively for $r_1,r_2,...,r_n$.

\begin{defn} \label{rat_alg}
  We define the \emph{rational closure} $R_{\mr{rat}}\fps{X}\subset R\fps{X}$ as the smallest subring of $R\fps{X}$ which contains $\{p^{-1}\in R\fps{X}\mid p\in R\pol{X} \textrm{, which is invertible in }R\fps{X}\}$.

  We also define the \emph{algebraic closure} $R_{\textrm{alg}}\fps{X}\subset R\fps{X}$ as the all collection of $p\in R\fps{X}$ which has a \emph{proper algebraic system}, where $p$ has a proper algebraic system if
  \begin{enumerate}
  \item There are $Q_1,...,Q_L\in R\pol{X\coprod Y}$ with a finite set $Y=\{Y_1,...,Y_L\}$ such that $\cf{Q_i}{Y_j}=0$ is satisfied for all $1\leq i,j\leq L$ for some $L\in \N$. \label{existense of pas}
  \item There are $p_1,...,p_L\in R\fps{X}$ such that $p=p_1$ and $p_i=Q_i(X_1,...,X_n,p_1,...,p_L)$ are satisfied for each $1\leq i \leq L$. \label{solution of pas}
  \end{enumerate}
  
  Let $\mc{I}:R\fps{X}\rightarrow \fps{X}$ is a homomorphism of $R$-module such that it sends $p\in R\fps{X}$ to $p-\cf{p}{e}e$. We also define $R^*\pol{X},R^*\fps{X},R_{\textrm{rat}}^*\fps{X}$ and $R_{\textrm{alg}}^*\fps{X}$ respectively as the $\mc{I}$'s range of $R\pol{X},R\fps{X},R_{\textrm{rat}}\fps{X}$ and $R_{\textrm{alg}}\fps{X}$.
\end{defn}

\begin{rmk} \label{rat_star structure}
  All element $p\in R_{\textrm{rat}}^*\fps{X}$ can be obtained from $X_1,X_2,...,X_n\in R_{\textrm{rat}}^*\fps{X}$ via finite composition of following procedures {\cite{Sc62}}.
  \begin{itemize}
  \item (pseudo-inverse) $a\in R_{\textrm{rat}}^*\fps{X}\mapsto a^*:=\sum_{k=1}^{\infty}a^k$
  \item (linear combination) $r_1,r_2\in R,a,b\in R_{\textrm{rat}}^*\fps{X}\mapsto r_1a+r_2b$
  \item (multiplication) $a,b\in R_{\textrm{rat}}^*\fps{X}\mapsto ab$
  \end{itemize}

  In addition, the algebraic closure $R_{\textrm{alg}}\fps{X}$ is a subring of $R\fps{X}$ \cite{Sc62}.
\end{rmk}

\begin{rmk}
  
  so we also say $p\in R\fps{X}$ has a proper algebraic system if there are $Q_1,...,Q_L$ such that their unique solution $p_1,...,p_L$ satisfies the condition \ref{solution of pas} in Definition \ref{rat_alg}.
\end{rmk}

Next theorem is prepared for proving an analytic property of Cauchy transforms of polynomial in free independent semicircular elements \cite{SS15}.

\begin{thm}[{\cite[Lemma 5.12]{SS15}}] \label{P_semi}
  We define $P_{\mr{semi}}\in\C^*\fps{X}$ as

  \begin{align} \label{def of P_semi}
    P_{\mr{semi}}:=\sum_{F\in F(X),F\neq e}\tau(F(s_1,s_2,...,s_n))F.
  \end{align}
  Then $P_{\mr{semi}}$ is an element of $\C_{\mr{alg}}^*\fps{X}$ and whose proper algebraic system can be taken as $Q_1:=\sum_{i=1}^n(X_i(Y_1+1))^2$ with $L=1$.
\end{thm}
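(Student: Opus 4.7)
The plan is to set $G := P_{\mr{semi}} + e \in \C\fps{X}$, the full moment generating series whose coefficient of any word $F \in F(X)$ equals $\tau(F(s_1,\ldots,s_n))$ (with the convention $\tau(e) = 1$), and derive the series identity
\begin{align}
  G = e + \sum_{i=1}^{n} X_i\, G\, X_i\, G. \label{eqplan:gfe}
\end{align}
After substituting $G = P_{\mr{semi}} + 1$ and rearranging, \eqref{eqplan:gfe} becomes exactly $P_{\mr{semi}} = \sum_{i=1}^n \bigl(X_i(P_{\mr{semi}}+1)\bigr)^2 = Q_1(X_1,\ldots,X_n,P_{\mr{semi}})$, which is the required fixed-point equation.

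To establish \eqref{eqplan:gfe}, I would combine the moment-cumulant relation of Remark \ref{uniqueness of moment} with the fact that the only nonvanishing joint free cumulants of $s_1,\ldots,s_n$ are $\kappa_2(s_i,s_i) = 1$ (by freeness together with the cumulant characterization of the standard semicircular). This reduces each moment to a count of matched noncrossing pair partitions:
\begin{align}
  \tau(s_{i_1} \cdots s_{i_M}) = \#\bigl\{\pi \in NC_2(M) : i_a = i_b \textrm{ for every block } \{a,b\} \in \pi\bigr\}.
\end{align}
Conditioning on the partner $k$ of the element $1$ in $\pi$, the noncrossing condition forces $\pi$ to split into pair partitions of $\{2,\ldots,k-1\}$ and $\{k+1,\ldots,M\}$ independently, and nonvanishing requires $i_1 = i_k$. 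This yields the word-level recursion
\begin{align}
  \tau(s_{i_1} \cdots s_{i_M}) = \sum_{k=2}^{M} \delta_{i_1,i_k}\, \tau(s_{i_2}\cdots s_{i_{k-1}})\, \tau(s_{i_{k+1}}\cdots s_{i_M}), \label{eqplan:rec}
\end{align}
and reading \eqref{eqplan:rec} off as the coefficient of $X_{i_1}\cdots X_{i_M}$ on each side of \eqref{eqplan:gfe} proves the identity (with the $M = 0$ case matching the constant terms on both sides).

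It remains to verify the axioms of Definition \ref{rat_alg} with $L = 1$: expanding
\begin{align}
  Q_1 = \sum_{i=1}^n \bigl(X_i Y_1 X_i Y_1 + X_i Y_1 X_i + X_i X_i Y_1 + X_i X_i\bigr),
\end{align}
every monomial contains some $X_i$, so the coefficient of the one-letter word $Y_1$ is zero, i.e.\ $\cf{Q_1}{Y_1} = 0$. Since $P_{\mr{semi}}$ itself has no constant term, it already lies in $\C^*\fps{X}$, and the fixed-point equation just established places it in $\C_{\mr{alg}}^*\fps{X}$. I expect the only real subtlety to be the bookkeeping in passing from the partition recursion \eqref{eqplan:rec} to the series identity \eqref{eqplan:gfe}, in particular making sure the constant term $e$ is correctly produced and that the vanishing of all odd moments is reflected on both sides of \eqref{eqplan:gfe}.
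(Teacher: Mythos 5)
The paper does not supply its own proof of Theorem~\ref{P_semi}; it only cites \cite[Lemma~5.12]{SS15}. Your argument is correct and is the standard route: the word-level recursion
\begin{align}
  \tau(s_{i_1}\cdots s_{i_M})=\sum_{k=2}^{M}\delta_{i_1,i_k}\,\tau(s_{i_2}\cdots s_{i_{k-1}})\,\tau(s_{i_{k+1}}\cdots s_{i_M})
\end{align}
follows from the non-crossing pair-partition expansion of mixed semicircular moments (conditioning on the partner of $1$), and reading it off coefficientwise is exactly the statement $G=e+\sum_{i=1}^n X_iGX_iG$ with $G=P_{\mr{semi}}+e$, which after substituting $G=P_{\mr{semi}}+1$ gives the fixed-point equation $P_{\mr{semi}}=Q_1(X_1,\dots,X_n,P_{\mr{semi}})$. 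Two small bookkeeping points you flagged are genuinely harmless: the odd-moment case is automatic because in the recursion one of the two subwords always has odd length, and properness holds because every monomial of $Q_1=\sum_i(X_i(Y_1+1))^2$ contains some $X_i$, so both $\cf{Q_1}{Y_1}=0$ and $\cf{Q_1}{e}=0$ (the latter is not written in Definition~\ref{rat_alg} but is what makes the iteration in Step~\ref{iterate} well-founded). This matches the argument in \cite{SS15}, which the paper is quoting.
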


We remark that our definition of $P_{\mr{semi}}$ is slightly different from \cite{SS15}. We defined $P_{\mr{semi}}$ as an element of $\C^*\fps{X}$ and so their difference is only coefficients of the unit of $F(X)$.

\begin{defn} \label{hadamard}
  Let $a,b$ are elements in $R\fps{X}$, we denote the \emph{Hadamard product} of $a$ and $b$ by $a\odot b$ which is the unique element in $R\fps{X}$ defined as $\cf{a\odot b}{F}=\cf{a}{F}\cf{b}{F}$ for any $F\in F(X)$.
\end{defn}

We introduce next Jungen's theorem which are rearranged by Sch\"utzenberger in \cite{Sc62}.

\begin{thm}[{\cite[Property 2.2]{Sc62}}] \label{jungen}
  Let $R',R''\subset R$ be commuting subalgebras of $R$ and

  \noindent
  $a\in R_{\mr{rat}}^{'*}\fps{X}$,$b\in R_{\mr{alg}}^{''*}\fps{X}$ be two elements of $R^*\fps{X}$, then $a\odot b$ is an element of $R_{\mr{alg}}^*\fps{X}$.
\end{thm}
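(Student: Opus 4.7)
The plan is to exploit a matrix representation of the rational series $a$ to lift the proper algebraic system of $b$ to one for $a\odot b$. By Sch\"utzenberger's representation theorem for non-commutative rational series (which also underlies the structural description of $R_{\mr{rat}}^*\fps{X}$ recalled in Remark \ref{rat_star structure}), there exist $d\geq 1$, a row vector $\lambda\in (R')^{1\times d}$, a column vector $\gamma\in (R')^{d\times 1}$, and a monoid morphism $\mu\colon F(X)\to M_d(R')$ with $\mu(e)=I_d$, such that $\cf{a}{F}=\lambda\,\mu(F)\,\gamma$ for every $F\in F(X)$.

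Let $b=b_1,b_2,\ldots,b_L\in R''\fps{X}$ be the solutions of a proper algebraic system $b_k=Q_k(X;b_1,\ldots,b_L)$ with $Q_k\in R''\pol{X\coprod Y}$ and $\cf{Q_k}{Y_j}=0$. I would introduce the matrix-valued series
\[
B^{(k)}:=\sum_{F\in F(X)}\mu(F)\,\cf{b_k}{F}\,F\in M_d(R\fps{X}),\qquad 1\le k\le L,
\]
and assert the matrix identity
\[
B^{(k)}=Q_k\bigl(\mu(X_1)X_1,\ldots,\mu(X_n)X_n;\,B^{(1)},\ldots,B^{(L)}\bigr).
\]
The verification is combinatorial: each monomial $Y_{k_0}X_{i_1}Y_{k_1}\cdots X_{i_s}Y_{k_s}$ of $Q_k$ contributes, after substitution, a sum over factorizations $F=F_0X_{i_1}F_1\cdots X_{i_s}F_s$ in which multiplicativity of $\mu$ reassembles $\mu(F)=\mu(F_0)\mu(X_{i_1})\mu(F_1)\cdots\mu(X_{i_s})\mu(F_s)$, while the scalar product $\cf{b_{k_0}}{F_0}\cdots\cf{b_{k_s}}{F_s}$ supplies the correct scalar contribution at $F$.

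Taking matrix entries produces a scalar polynomial system in the $d^2L$ series $\{B^{(k)}_{ij}\}$; since each $\mu(X_i)X_i$ has zero constant term, the condition $\cf{Q_k}{Y_j}=0$ descends to the entry system, which is therefore proper, and every $B^{(k)}_{ij}$ lies in $R_{\mr{alg}}\fps{X}$. Consequently
\[
a\odot b=\sum_{i,j}\lambda_i\,B^{(1)}_{ij}\,\gamma_j
\]
lies in $R_{\mr{alg}}\fps{X}$ (a ring by Remark \ref{rat_star structure}), and has zero constant term because $a,b\in R^*\fps{X}$, so $a\odot b\in R_{\mr{alg}}^*\fps{X}$. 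The main obstacle will be the matrix identity for $B^{(k)}$, where the commutation of $R'$ and $R''$ is indispensable: the scalars $\cf{b_{k_j}}{F_j}\in R''$ must pass freely past the matrix factors in $M_d(R')$ when matching the two sides. Once this identity is established, extracting entries and tracking constant terms is routine.
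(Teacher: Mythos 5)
Your proof is correct and is the standard Sch\"utzenberger argument that the paper itself is invoking: represent the rational series $a$ linearly via $(\lambda,\mu,\gamma)$, lift the proper algebraic system for $b$ to a matrix-valued system by substituting $X_i\mapsto\mu(X_i)X_i$ and $Y_j\mapsto$ an $N\times N$ matrix of fresh unknowns, use commutativity of $R'$ and $R''$ to slide the $R''$-coefficients of $b$ past the $M_d(R')$-factors so that $\mu$'s multiplicativity reassembles $\mu(F)$, and then take entries to get a scalar proper system of size $d^2L$. This is precisely the construction the paper reviews in Step~\ref{pas} for the special case $b=P_{\mr{semi}}$ with proper system $Q_1=\sum_i(X_i(Y_1+1))^2$, yielding the $N^2$-unknown matrix system $Q'$; your version, which keeps the explicit $X_i$ alongside $\mu(X_i)$ in the substitution, is the careful form of what the paper's displayed $Q'=\sum_i(\mu(X_i)(\mc{Y}+I))^2$ abbreviates (the $X_i$ is implicitly restored, then set to $1$ in Step~\ref{iterate}).
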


\section{Algorithm} \label{algorithm}

\noindent Let $X=\{X_1,X_2,...,X_n\}$ be a finite set, $p\in \C\pol{X_1,X_2,...,X_n}$ be a non-commutative polynomial and $M$ be a positive integer. In this chapter, we give an algorithm which calculates the $M$-th moment of $p(s_1,s_2,...,s_n)$ where $s_1,s_2,...,s_n$ are free independent standard semicircular elements.

A sketch of our algorithm is the following. Assume $p$ is an element of $\C^*\pol{X}$. Since an element 
\begin{align}
  \sum_{m\geq 1}z^mp(X_1,X_2,...,X_n)^m=(zp(X_1,X_2,...,X_n))^* \label{rat moment series}
\end{align}
is in $\C[z]_{\mr{rat}}^*\fps{X}$ by an argument in Remark \ref{rat_star structure}, we can apply Jungen's Theorem (Theorem \ref{jungen}) in previous chapter for $R=R'=\C[z], R''=\C,a=(zp(X_1,X_2,...,X_n))^*$ and $b=P_{\mr{semi}}$ which is defined in Theorem \ref{P_semi}. We then substitute each $X_1,X_2,...,X_n$ for $1$ and obtain

\noindent
$a\odot b(1,1,...,1)=:A(z)\in \C^*[[z]]$ which is well-defined. Since $\cf{A(z)}{z^m}=\tau(p(s_1,...,s_n)^m)$ for any $m\geq 1$, all we have to do is calculating $\cf{A(z)}{z^M}$, but this can be done by iterating a proper algebraic system of $a\odot b$ in $\C[z]/(z^{M+1})$ instead of $\C[z]\pol{X}$ by sending elements as
\begin{align}
  \C[z]\pol{X}\ni f\mapsto f(1,1,...,1)/(z^{M+1})\in\C[z]/(z^{M+1}). \label{sending to mod}
\end{align}
We explicitly give the procedures of the algorithm as follows.

\begin{step}[Split $p$ into $\C^*\pol{X}$ and $\C$] \label{split}
  Let $c:=p(0,0,...,0)\in \C$ be the constant part of $p$. Then we denote the reminder part by $q:=p-c\in\C^*\pol{X}$.
\end{step}

\begin{step}[Encode $(zq)^*$ as a tuple of matrices] \label{matrix}
  For obtaining a proper algebraic system of

  \noindent
  $(zq)^*\odot P_{\mr{semi}}$ by using Jungen's theorem, we encode $(zq)^*$ as a monoid homomorphism by the argument in \cite{Sc62}. Let $M_N(R)$ be the $N\times N$-matrix algebra over $R$.
  \begin{prop}[{\cite[Property 2.1]{Sc62}}] \label{monoid hom assoced with rat}
    Assume $a\in R^*\fps{X}$, the following are equivalent.
    \begin{enumerate}
    \item $a\in R_{\mr{rat}}^*\fps{X}$ \label{rationality}
    \item There are a positive integer $N\geq 2$ and a monoid homomorphism $\mu:F(X)\rightarrow M_N(R)$ such that $\cf{a}{F}=\mu(F)_{1,N}$ for any $F\in F(X)$. \label{monoid hom}
    \end{enumerate}
  \end{prop}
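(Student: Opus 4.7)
The plan is to prove the two implications separately, following the classical Sch\"utzenberger--Kleene circle of ideas that identifies rational series with recognizable series.

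For $(\ref{monoid hom})\Rightarrow(\ref{rationality})$: given the monoid homomorphism $\mu$, I would form the matrix
\begin{align}
M := \sum_{i=1}^n \mu(X_i)\,X_i \in M_N(R\pol{X}),
\end{align}
which has no constant term. By multiplicativity of $\mu$ one verifies inductively that $M^k = \sum_{|F|=k} \mu(F)\,F$ for every $k \geq 1$, hence
\begin{align}
M^* := \sum_{k \geq 1} M^k = \sum_{F \in F(X),\,F \neq e} \mu(F)\,F \in M_N(R^*\fps{X}),
\end{align}
and by hypothesis its $(1,N)$-entry equals $a$. It then suffices to establish the matrix-star closure lemma: if $P \in M_N(R^*\fps{X})$ has all entries in $R_{\mr{rat}}^*\fps{X}$, then so does $P^*$. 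This I would prove by induction on $N$ via the block decomposition $P = \bigl(\begin{smallmatrix} A & B \\ C & D \end{smallmatrix}\bigr)$ with $D$ a scalar block, using the explicit identity expressing each block of $P^*$ in terms of $A^*$, $(D + C A^* B)^*$, and products/sums of these with $B$ and $C$; closure of $R_{\mr{rat}}^*\fps{X}$ under the three operations of Remark \ref{rat_star structure} keeps every block inside the class.

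For $(\ref{rationality})\Rightarrow(\ref{monoid hom})$: I would proceed by structural induction on the construction of $a$ through those same three operations, after first establishing a more flexible normal form, namely $a \in R_{\mr{rat}}^*\fps{X}$ iff there exist $N' \geq 1$, vectors $\alpha \in R^{1 \times N'},\beta \in R^{N' \times 1}$, and a monoid homomorphism $\mu: F(X) \to M_{N'}(R)$ with $\cf{a}{F} = \alpha\,\mu(F)\,\beta$ for every $F$. The base case $a = X_i$ is handled by $N' = 2$, $\alpha = (1,0)$, $\beta = (0,1)^{\top}$, $\mu(X_i)$ the elementary matrix with a single $1$ in position $(1,2)$, and $\mu(X_j) = 0$ for $j \neq i$. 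Sums are handled by block-diagonal concatenation, products by a block upper-triangular gluing through the interface vectors, and the pseudo-inverse $a \mapsto a^*$ by modifying the representation matrix to adjoin the feedback term $\beta\alpha$ that closes the loop from end to start. Finally, given any such triple $(\alpha, \mu, \beta)$, I would enlarge $N'$ by two with dedicated start and end basis vectors carrying $\alpha$ and $\beta$, obtaining the normalization $\alpha = e_1^{\top},\beta = e_N$ demanded by the statement.

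The main obstacle is the closure under the pseudo-inverse: it is the only operation producing genuinely infinite sums, and organizing the feedback so that the resulting $\mu$ remains a bona fide monoid homomorphism into finite matrices requires the explicit feedback construction above rather than a naive iteration. Once this step is carefully verified, the remainder of both directions reduces to linear-algebraic bookkeeping, largely as presented in Sch\"utzenberger \cite{Sc62}.
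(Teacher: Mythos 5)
Your proposal is correct, and the constructive direction $(\ref{rationality})\Rightarrow(\ref{monoid hom})$ is in spirit the same structural induction over the three closure operations of Remark \ref{rat_star structure} that the paper reviews; but the bookkeeping you use is different and so is the scope. The paper fixes from the outset the rigid normal form $\cf{a}{F}=\mu(F)_{1,N}$ and exhibits, operation by operation, an explicit matrix of fixed shape (replace first column by last for $*$, block sizes $N_1+N_2+2$ for sums, $N_1+N_2$ for products); this is deliberate, because those explicit sizes feed directly into Remark \ref{estimation of matrix size} and hence into the complexity bound $O((\deg p)^4 m_p^3 n M^3)$ of Step \ref{iterate}. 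You instead work with the more flexible linear-representation triple $(\alpha,\mu,\beta)$ with $\cf{a}{F}=\alpha\,\mu(F)\,\beta$, handle $*$ via the feedback perturbation $\mu(X_i)\mapsto(I+\beta\alpha)\mu(X_i)$ (which needs, and has, the invariant $\alpha\beta=0$ along the induction so that $\alpha(I+\beta\alpha)=\alpha$), and only normalize to $\alpha=e_1^{\top}$, $\beta=e_N$ by padding two coordinates at the very end. This is the standard Berstel--Reutenauer presentation: it buys cleaner closure arguments at the cost of not yielding the specific matrix sizes the paper tracks. Finally, you also supply the converse direction $(\ref{monoid hom})\Rightarrow(\ref{rationality})$ via $M:=\sum_i\mu(X_i)X_i$ and the block-decomposition lemma for $P\mapsto P^*$; the paper omits this entirely (it only ``review[s] the constructive part''), since only $(\ref{rationality})\Rightarrow(\ref{monoid hom})$ is used in the algorithm. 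So: same skeleton for the used direction, a more general and textbook-flavoured normal form on your side, plus the spare converse the paper does not need.
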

  We review on the constructive part of a proof in {\cite[Property 2.1]{Sc62}} for evaluating the time complexity of our algorithm.
  \begin{proof}[proof in {\cite[Property 2.1]{Sc62}}]
    Assume $a\in R_{\mr{rat}}^*\fps{X}$. All we have to do is constructing associated monoid homomorphism which satisfies \ref{monoid hom} by induction on the structure of $a$ in Remark \ref{rat_star structure}. Since $F(X)$ is a free monoid, a monoid homomorphism $\mu:F(X)\rightarrow M_N(R)$ is uniquely determined by ranges of generators $X_1,X_2,...,X_n$.
    
    If $a$ is given by $a=X_i\in R_{\mr{rat}}^*\fps{X}$ for some $1\leq i \leq n$, a monoid homomorphism $\mu$ which satisfies \ref{monoid hom} can be obtained with $N=2$ as

    \begin{align} 
      \mu (X_i) &=
      \begin{pmatrix} \label{monoid hom of var}
        0 & 1\\
        0 & 0
      \end{pmatrix}, 
      \\
      \mu (X_j) &=
      \begin{pmatrix} \label{monoid hom of var otherwise}
        0 & 0\\
        0 & 0
      \end{pmatrix} ~(i\neq j).
    \end{align}

    Then we assume there is a $a'\in R_{\mr{rat}}^*\fps{X}$ with a monoid homomorphism $\mu':F(X)\rightarrow M_{N'}(R)$ which satisfies \ref{monoid hom}. Then $a:=(a')^*$, the pseudo-inverse of $a'$, is in $R_{\mr{rat}}^*\fps{X}$, and a monoid homomorphism $\mu$ which satisfies \ref{monoid hom} can be obtained with $N=N'$ as

    \begin{align} 
      \mu (X_i) =
      \begin{pmatrix} \label{monoid hom of pseudo-inverse}
        \mu'(X_i)_{1,N} & \mu'(X_i)_{1,2} & \mu'(X_i)_{1,3} & \cdots & \mu'(X_i)_{1,N}\\
        \mu'(X_i)_{2,N} & \mu'(X_i)_{2,2} & \mu'(X_i)_{2,3} & \cdots & \mu'(X_i)_{2,N}\\
        \vdots & \vdots & \vdots & \ddots & \vdots \\
        \mu'(X_i)_{N,N} & \mu'(X_i)_{N,2} & \mu'(X_i)_{N,3} & \cdots & \mu'(X_i)_{N,N}
      \end{pmatrix} ~\textrm{for any }1\leq i\leq n.
    \end{align}

    Finally, we assume there are two rational elements $a',b'\in R_{\mr{rat}}^*\fps{X}$ with two monoid homomorphism $\mu'_1:F(X)\rightarrow M_{N_1}(R),\mu'_2:F(X)\rightarrow M_{N_2}(R)$ which satisfy \ref{monoid hom} respectively. 

    Let $r_1,r_2\in R$ be two elements. Then the linear combination $a:=r_1a'+r_2b'$ is in $R_{\mr{rat}}^*\fps{X}$ and a monoid homomorphism $\mu$ which satisfies \ref{monoid hom} can be obtained with $N=N_1+N_2+2$ as

    \begin{align} 
      \mu (X_i) =
      \begin{pmatrix}
        0 & Z^i_{1,1} & \cdots & Z^i_{1,N_1} & W^i_{1,1} & \cdots & W^i_{1,N_2} & Z^i_{1,N_1}+W^i_{1,N_2}\\
        0 & Z^i_{1,1} & \cdots & Z^i_{1,N_1} & 0 & \cdots & 0 & Z^i_{1,N_1}\\
        0 & Z^i_{2,1} & \cdots & Z^i_{2,N_1} & 0 & \cdots & 0 &Z^i_{2,N_1} \\
        \vdots & \vdots & \ddots & \vdots & \vdots & \ddots & \vdots & \vdots\\
        0 & Z^i_{N_1,1} & \cdots & Z^i_{N_1,N_1} & 0 & \cdots & 0 & Z^i_{N_1,N_1}\\
        0 & 0 & \cdots & 0 & W^i_{1,1} & \cdots & W^i_{1,N_2} & W^i_{1,N_2}\\
        0 & 0 & \cdots & 0 & W^i_{2,1} & \cdots & W^i_{2,N_2} & W^i_{2,N_2}\\
        \vdots & \vdots & \ddots & \vdots & \vdots & \ddots & \vdots & \vdots\\
        0 & 0 & \cdots & 0 & W^i_{N_2,1} & \cdots & W^i_{N_2,N_2} & W^i_{N_2,N_2}\\
        0 & 0 & \cdots & 0 & 0 & \cdots & 0 & 0
      \end{pmatrix},
    \end{align}

    for any $1\leq i\leq n$, where $Z^i:=r_1\mu'_1(X_i)$ and $W^i:=r_2\mu'_2(X_i)$.

    The multiplication $a:=a'b'$ is in $R_{\mr{rat}}^*\fps{X}$ and a monoid homomorphism $\mu$ which satisfies \ref{monoid hom} can be obtained with $N=N_1+N_2$ as

    \begin{align} 
      \mu (X_i) =
      \begin{pmatrix} 
        Z^i_{1,1} & Z^i_{1,2} & \cdots & Z^i_{1,N_1} & Z^i_{1,N_1} & 0 & \cdots & 0\\
        Z^i_{2,1} & Z^i_{2,2} & \cdots & Z^i_{2,N_1} & Z^i_{2,N_1} & 0 & \cdots & 0\\
        \vdots & \vdots & \ddots & \vdots & \vdots & \vdots & \ddots & \vdots\\
        Z^i_{N_1,1} & Z^i_{N_1,2} & \cdots & Z^i_{N_1,N_1} & Z^i_{N_1,N_1} & 0 & \cdots & 0\\
        0 & 0 & \cdots & 0 & W^i_{1,1} & W^i_{1,2} & \cdots & W^i_{1,N_2}\\
        0 & 0 & \cdots & 0 & W^i_{2,1} & W^i_{2,2} & \cdots & W^i_{2,N_2}\\
        \vdots & \vdots & \ddots & \vdots & \vdots & \vdots & \ddots & \vdots\\
        0 & 0 & \cdots & 0 & W^i_{N_2,1} & W^i_{N_2,2} & \cdots & W^i_{N_2,N_2}
      \end{pmatrix},
    \end{align}

    for any $1\leq i\leq n$, where $Z^i:=\mu'_1(X_i)$ and $W^i:=\mu'_2(X_i)$.
    
  \end{proof}

  Therefore we can obtain a monoid homomorphism associated with $(zq)^*$ by constructing that of $zq$ and take pseudo-inverse via \eqref{monoid hom of pseudo-inverse}.
  
  \begin{rmk} \label{estimation of matrix size}
    By the above construction, the size $N$ of an associated monoid homomorphism of $(zq)^*$ is estimated to be less than equal $2m_q(\deg q)+2m_q$ for given $q$ in Step \ref{split}, where $m_q$ is defined in Chapter \ref{introduction}.
  \end{rmk}
\end{step}

\begin{step}[Make a proper algebraic system of $(zq)^*\odot P_{\mr{semi}}$] \label{pas}
  We review the proof of {\cite[Property 2.2]{Sc62}} which gives a construction of a proper algebraic system of the Hadamard product in Theorem \ref{jungen}. The following theorem is a combination of {\cite[Property 2.2]{Sc62}} and {\cite[Lemma 5.12]{SS15}}.

  \begin{thm}[{\cite[Property 2.2]{Sc62}},{\cite[Lemma 5.12]{SS15}}]
    Assume $a\in R_{\mr{rat}}^*\fps{X}$ and a monoid homomorphism $\mu:F(X)\rightarrow M_N(R')$ satisfies \ref{monoid hom} in Proposition \ref{monoid hom assoced with rat}. Then $a\odot P_{\mr{semi}}\in R_{\mr{alg}}^*\fps{X}$ and its proper algebraic system can be taken with $L=N^2$ as

    \begin{align}
      (Q_1,...,Q_{N^2}):=(Q_{1,N}',Q_{\sigma_1}',...,Q_{\sigma_{N^2-1}}')
    \end{align}
    where $\sigma_1,...,\sigma_{N^2-1}$ is any permutation of $\{(i,j)\mid 1\leq i,j\leq N\}\setminus \{(1,N)\}$, and $Q'\in M_N(R\pol{X\coprod Y})$ is defined as

    \begin{align} 
    Q'=\sum_{i=1}^n(\mu(X_i)(\mc{Y}+I_{N\times N}))^2, \label{pas of zq hada P_semi}
    \end{align}

    where $\mc{Y}\in M_N(R\pol{X\coprod Y})$ is defined as $\mc{Y}_{i,j}=Y_{i+(j-1)N}$ for each $1\leq i,j\leq N$ and $I_{N\times N}$ is the multiplication unit of $M_N(R\pol{X\coprod Y})$.
  \end{thm}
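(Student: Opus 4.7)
The plan is to realize $a\odot P_{\mr{semi}}$ as the $(1,N)$-entry of a matrix-valued formal series $\hat P\in M_N(R\fps{X})$ defined by
\begin{align}
\hat P_{i,j}:=\sum_{F\in F(X)}\mu(F)_{i,j}\,\tau(F(s_1,\ldots,s_n))\,F.
\end{align}
Since $\mu(e)_{1,N}=\delta_{1,N}=0$ for $N\geq 2$, one immediately has $\hat P_{1,N}=\sum_{F\ne e}\cf{a}{F}\tau(F(s_1,\ldots,s_n))\,F=a\odot P_{\mr{semi}}$, and $\mathcal P:=\hat P-I_{N\times N}$ has every entry in $R^*\fps{X}$. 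I would then reduce the theorem to showing that $\mathcal P$ solves the matrix equation $\mathcal Y=Q'(\mathcal Y)$, so that unrolling yields the claimed proper algebraic system of size $N^2$ after placing $Q'_{1,N}$ first.

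The crux will be the matrix fixed-point identity $\hat P=I_{N\times N}+\sum_{k=1}^{n}(\mu(X_k)X_k\,\hat P)^2$. From Theorem \ref{P_semi}, the series $\tilde P:=P_{\mr{semi}}+1$ satisfies $\tilde P=1+\sum_k(X_k\tilde P)^2$, which at the coefficient level reads
\begin{align}
\tau(W(s_1,\ldots,s_n))=\sum_{k=1}^{n}\sum_{\substack{F_1,F_2\in F(X)\\ X_kF_1X_kF_2=W}}\tau(F_1(s_1,\ldots,s_n))\,\tau(F_2(s_1,\ldots,s_n))
\end{align}
for every $W\ne e$. I would substitute this into the definition of $\hat P_{i,j}$, use the homomorphism expansion $\mu(X_kF_1X_kF_2)_{i,j}=\sum_{l,m,p}\mu(X_k)_{i,l}\mu(F_1)_{l,m}\mu(X_k)_{m,p}\mu(F_2)_{p,j}$, and rearrange the $R'$-valued matrix entries past the $R''$-valued moment factors---legal by the commuting-subring hypothesis---so as to reassemble the right-hand side into $(\mu(X_k)X_k\hat P\mu(X_k)X_k\hat P)_{i,j}$. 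Translating back to $\mathcal P$ would then give $\mathcal P=\sum_k(\mu(X_k)X_k(\mathcal P+I_{N\times N}))^2=Q'(\mathcal P)$, matching the statement's $Q'$ once the notation $\mu(X_k)$ inside $Q'$ is read as carrying the adjacent generator $X_k$, so that $Q'$ indeed lies in $M_N(R\pol{X\coprod Y})$.

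The final step is to label the $N^2$ matrix entries via $(i,j)\mapsto i+(j-1)N$, with the $(1,N)$-entry first, producing $(Q_1,\ldots,Q_{N^2})=(Q'_{1,N},Q'_{\sigma_1},\ldots,Q'_{\sigma_{N^2-1}})$; the first component of the solution will then recover $p_1=\mathcal P_{1,N}=a\odot P_{\mr{semi}}$. Properness holds because every monomial of $(\mu(X_k)X_k(\mathcal Y+I_{N\times N}))^2$ carries the two explicit factors $X_k$, so $\cf{Q'_{i,j}}{Y_m}=0$ for every $m$, and existence-plus-uniqueness of the solution follows by the standard degree-induction argument underlying proper algebraic systems.

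The main obstacle will be the coefficient-matching step in the middle paragraph: one must fuse the semicircular moment recursion for $P_{\mr{semi}}$ with the matrix-word dictionary supplied by $\mu$, and verify term by term that $[R',R'']=0$ legalises every rearrangement performed during coefficient extraction. Once the matrix fixed-point equation is in hand, unrolling into $N^2$ scalar equations and verifying properness becomes a purely formal bookkeeping task.
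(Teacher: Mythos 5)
Your proof is correct and amounts to a reconstruction of the very construction Sch\"utzenberger gives for Property 2.2, specialized to the one-equation proper algebraic system $Q_1=\sum_i(X_i(Y_1+1))^2$ of $P_{\mr{semi}}$: the paper does not reproduce this proof but merely cites \cite{Sc62} and \cite{SS15}, and your matrix fixed-point identity $\hat P = I + \sum_k(\mu(X_k)X_k\hat P)^2$, obtained by fusing the scalar recursion $\tilde P = 1 + \sum_k(X_k\tilde P)^2$ with the homomorphism property of $\mu$ and the centrality of the complex moments $\tau(F(\vec s))$, is exactly the argument underlying those citations. You also correctly flag a genuine notational defect in the displayed formula \eqref{pas of zq hada P_semi}: as written, $Q'=\sum_i(\mu(X_i)(\mc Y + I))^2$ contains no occurrence of the generators $X_i$, so it would lie in $M_N(R\pol{Y})$ and its entries would have nonzero coefficients on the $Y_m$ (violating properness); the intended expression, consistent with both your derivation and the scalar system $\sum_i(X_i(Y_1+1))^2$, carries an explicit $X_i$ factor alongside each $\mu(X_i)$, i.e.\ $Q'=\sum_i\bigl(X_i\,\mu(X_i)(\mc Y + I_{N\times N})\bigr)^2$, which then genuinely lies in $M_N(R\pol{X\coprod Y})$ and satisfies $\cf{Q'_{jk}}{Y_m}=0$. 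This elision is harmless in the paper's subsequent Step~\ref{iterate} only because there all $X_i$ are substituted by $1$ via $\phi$, but the theorem statement as a claim about a proper algebraic system in $R\pol{X\coprod Y}$ requires the $X_i$ factors you supply.
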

  
  So we can obtain a proper algebraic system of $(zq)^*\odot P_{\mr{semi}}$ with size $L=N^2$ and it can be written as \eqref{pas of zq hada P_semi}.
  
\end{step}

\begin{step}[Iterate the proper algebraic system in Step \ref{pas} for $(\deg p)M$ times] \label{iterate}

  In this step, we calculate $\cf{(zq)^*\odot P_{\mr{semi}}(1,1,...,1)}{z^m}=\tau(q(s_1,s_2,...,s_n)^m)$ for each positive integer $m\leq M$ by using the proper algebraic system in Step \ref{pas}. Let $\pi:\C[z]\rightarrow \C[z]/(z^{M+1})$ be the quotient map.

  \begin{defn} \label{substitute by 1}
    We say $a\in \C[z]\fps{X}$ is \emph{good} if
    \begin{align}
      \{F\in F(X)\mid \cf{\cf{a}{F}}{z^{m}}\neq 0\} \label{def of good}
    \end{align}
    is finite set for any non-negative integer $m$. We denote the collection of all good elements in $\C[z]\fps{X}$ by $\C[z]_{\mr{good}}\fps{X}$.
    
    We define $\phi:\C[z]_{\mr{good}}\fps{X}\rightarrow \C[[z]]$ by defining $\phi(a)$ for each good $a$ as a unique element such that
    \begin{align}
      \cf{\phi(a)}{z^{m}}=\sum_{F\in F(X)}\cf{\cf{a}{F}}{z^{m}} \label{def of substitute}
    \end{align}
    holds for any non-negative integer $m$.
  \end{defn}

  \begin{rmk}
    All $a\in R\pol{X}\subset R\fps{X}$ are good and $a(1,1,...,1)=\phi(a)$ holds. For any good $a$ and non-negative integer $m$, summands of the right hand side of \eqref{def of substitute} are zero except for finite monomials $F$.
    
    We also remark that $(zq)^*\odot P_{\mr{semi}}\in \C[z]_{\mr{alg}}^*\fps{X}$ is good since summands of the right hand side of \eqref{def of substitute} for $a=(zq)^*\odot P_{\mr{semi}}$ are zero except for monomials $F$ such that $\cf{q^m}{F}\neq 0$. Then
    \begin{align}
      \cf{\phi((zq)^*\odot P_{\mr{semi}})}{z^m}& =\sum_{F\in F(X)}\cf{\cf{(zq)^*\odot P_{\mr{semi}}}{F}}{z^{m}}\\
      &=\sum_{F\in F(X)}\cf{\cf{(zq)^m\odot P_{\mr{semi}}}{F}}{z^{m}}\\
      &=\sum_{F\in F(X)}\cf{q^m\odot P_{\mr{semi}}}{F}\\
      &=\sum_{F\in F(X),F\neq e}\tau(F(s_1,s_2,...,s_n))\cf{q^m}{F}\\
      &=\sum_{F\in F(X)}\tau(F(s_1,s_2,...,s_n))\cf{q^m}{F}\\
      &=\tau(q(s_1,s_2,...,s_n)^m)
    \end{align}
    holds for any positive integer $m$ since $\cf{q^m}{e}=0$ holds by $\cf{q}{e}=0$.
  \end{rmk}

  Assume $a\in \C[z]_{\mr{alg}}^*\fps{X}$ and a proper algebraic system of $a$ is given as $Q_1,...,Q_L\in \C[z]\pol{X\coprod Y}$. We define $((a_1^m,...,a_L^m))_{m\geq 0}\subset (\C[z]\pol{X})^L$ as

  \begin{align}
    (a_1^0,...,a_L^0) &= (0,...,0),  \label{init pas}\\
    (a_1^{m+1},...,a_L^{m+1}) &= (Q_1(X_1,...,X_n,a_1^m,...,a_L^m),...,Q_L(X_1,...,X_n,a_1^m,...,a_L^m)) \textrm{ for }m\geq 0 . \label{inductive pas}
  \end{align}
  
  \begin{thm} \label{prf of iterate}
    Let $M$ be a positive integer and $a\in \C[z]_{\mr{alg}}^*\fps{X}$ be a good element as above. Then $\cf{\phi(a)}{z^m}=\cf{\phi(a_{1}^{M'})}{z^m}$ holds for any $m\leq M$ such that
    \begin{align}
      \max_{F\in F(X)}\{\deg F\mid 0\leq\exists m\leq M,\cf{\cf{a}{F}}{z^m}\neq 0\}\leq M'. \label{max degree}
    \end{align}

    Moreover, the left hand side of \eqref{max degree} is finite for any positive integer $M$.
  \end{thm}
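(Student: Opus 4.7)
The plan is to handle the two assertions of Theorem \ref{prf of iterate} separately, with the finiteness being essentially immediate and the coefficient equality carrying the substance of the argument. Finiteness of the maximum in \eqref{max degree} follows at once from goodness of $a$: for each fixed $0 \leq m' \leq M$, Definition \ref{substitute by 1} makes the set $\{F \in F(X) : \cf{\cf{a}{F}}{z^{m'}} \neq 0\}$ finite, so the union over $m' \leq M$ is a finite union of finite sets, on which $\deg$ attains a finite maximum.

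For the coefficient equality, the first ingredient is a convergence lemma for the iteration, proved by induction on $k$: for every $1 \leq i \leq L$ and every $F \in F(X)$ with $\deg F \leq k$, we have $\cf{a_i^k}{F} = \cf{p_i}{F}$. The base case $k = 0$ holds because $p_i \in \C[z]^*\fps{X}$ forces $\cf{p_i}{e} = 0 = \cf{a_i^0}{e}$. For the inductive step I would expand $Q_i = \sum_{I,J} c_{I,J} X^I Y^J$ and examine the coefficient of a fixed $F$ in $Q_i(X, a^k)$: it is a sum, indexed by pairs $(I, J)$ and factorizations $F = X^I G_1 \cdots G_{|J|}$ with each $G_r$ nonempty (since each $p_{j_r}$ has zero constant term), of products $\prod_r \cf{a_{j_r}^k}{G_r}$. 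The hypothesis $\cf{Q_i}{Y_j} = 0$ guarantees that summands with a single $Y$-letter carry at least one $X$, so in every surviving summand each $G_r$ has degree $\leq k$ and the inductive hypothesis yields $\cf{a_{j_r}^k}{G_r} = \cf{p_{j_r}}{G_r}$; summing gives $\cf{a_i^{k+1}}{F} = \cf{p_i}{F}$.

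Applying the lemma with $k = M'$ and using the hypothesis on $M'$ gives, for each $m \leq M$,
\begin{align}
\cf{\phi(a)}{z^m} = \sum_{F : \deg F \leq M'} \cf{\cf{a}{F}}{z^m} = \sum_{F : \deg F \leq M'} \cf{\cf{a_1^{M'}}{F}}{z^m}.
\end{align}
The main obstacle is to match the right-hand side with $\cf{\phi(a_1^{M'})}{z^m} = \sum_F \cf{\cf{a_1^{M'}}{F}}{z^m}$, i.e., to verify that monomials of degree $> M'$ contribute nothing to the $z^m$-coefficient of $a_1^{M'}$ for $m \leq M$. The plan here is to exploit the joint $(z, X)$-bidegree structure built into the proper algebraic system produced in Step \ref{pas}: by a second induction on $k$, every term $c z^r F$ appearing in $a_i^k$ should satisfy $\deg F \leq r \deg q$, a property inherited from the analogous property of each $Q_i$ in the Jungen-type construction around $(zq)^*$. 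Combined with the choice $M' \geq M \deg q$ suggested by Step \ref{iterate}, any monomial with $\deg F > M'$ in $a_1^{M'}$ carries $z$-degree $> M$, hence contributes zero to the $z^m$-coefficient for $m \leq M$, closing the argument.
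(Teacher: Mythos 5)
Your proposal follows the same overall route as the paper's proof: goodness gives finiteness of the degree maximum, and a Schützenberger-style convergence lemma $\cf{a_i^k}{F}=\cf{p_i}{F}$ for $\deg F\leq k$ (which you prove by induction; the paper only cites \cite{Sc62} for it) gives the restricted-sum identity $\cf{\phi(a)}{z^m}=\sum_{\deg F\leq M'}\cf{\cf{a_1^{M'}}{F}}{z^m}$. The genuine difference is what happens after that. The paper silently passes from this restricted sum to the unrestricted sum $\cf{\phi(a_1^{M'})}{z^m}=\sum_{F\in F(X)}\cf{\cf{a_1^{M'}}{F}}{z^m}$ without comment, but these need not coincide a priori: since each $Q_i$ has $X$-degree $2$ in the $P_{\mr{semi}}$ system, the iterate $a_1^{M'}$ can and does contain monomials of $X$-degree up to $2M'$, and nothing in the convergence lemma controls their coefficients. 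You are right to flag this as the main obstacle, and your proposed fix — a second induction tracking the $(z,X)$-bidegree to show $\cf{\cf{a_i^k}{F}}{z^r}\ne 0$ forces $\deg F\le r\deg q$, so monomials with $\deg F>M'\geq M\deg q$ carry $z$-degree $>M$ — is exactly what is missing and is sound for the $a=(zq)^*\odot P_{\mr{semi}}$ case used in Step~\ref{iterate}. One caveat: the theorem as written applies to an arbitrary good $a\in\C[z]_{\mr{alg}}^*\fps{X}$, and your bidegree bound is specific to the $(zq)^*$ construction; but since the paper itself only ever invokes the result for that $a$, and its own proof does not supply any general justification of this step either, you have in effect produced a more careful proof than the one in the paper for the case that matters.
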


  \begin{proof}
    Assume $a\in \C[z]_{\mr{alg}}^*\fps{X}$ is good and $((a_1^m,...,a_L^m))_{m\geq 0}\subset (\C[z]\pol{X})^L$ is defined as above. By an argument in \cite{Sc62}, $\cf{a}{F}=\cf{a_1^{m'}}{F}$ holds for any positive integer $m'$ and $F\in F(X)$ such that $\deg F\leq m'$. So

    \begin{align}
      \cf{\phi(a)}{z^m}&=\sum_{F\in F(X)}\cf{\cf{a}{F}}{z^m}\\
      &=\sum_{F\in F(X),\cf{\cf{a}{F}}{z^m}\neq 0}\cf{\cf{a}{F}}{z^m}\\
      &=\sum_{F\in F(X),\deg F\leq M'}\cf{\cf{a}{F}}{z^m}\\
      &=\sum_{F\in F(X),\deg F\leq M'}\cf{\cf{a_1^{M'}}{F}}{z^m}\\
      &=\cf{\phi(a_{1}^{M'})}{z^m}
    \end{align}
    holds for any non-negative integer $m\leq M$. Therefore the former statement holds.

    The latter statement holds since $\cf{\cf{a}{F}}{z^m}\neq 0$ holds only for finite monomials $F\in F(X)$ for any non-negative integer $m$ by goodness of $a$.
  \end{proof}
  
  Since the left hand side of \eqref{max degree} is less than equal $(\deg p)M$ for $a=(zq)^*\odot P_{\mr{semi}}$,
  \begin{align}
    \cf{\phi(a)}{z^m}&=\cf{\phi(a_1^{(\deg p)M})}{z^m} \label{moment by iterating}
  \end{align}
  is satisfied for any positive integer $m\leq M$ and proper algebraic system of $a$ by Theorem \ref{prf of iterate}. However, since
  \begin{align}
    a_i^{m+1}(1,1,...,1)&= Q_i(1,1,...,1,a_1^{m}(1,1,...,1),...,a_L^{m}(1,1,...,1))
  \end{align}

  holds,
  
  \begin{align}
    \pi(a_i^{m+1}(1,1,...,1))&=\pi(Q_i(1,1,...,1,a_1^{m}(1,1,...,1),...,a_L^{m}(1,1,...,1))\\
    &= \tilde{Q}_i(1,1,...,1,\pi(a_1^{m}(1,1,...,1)),...,\pi(a_L^{m}(1,1,...,1))) \label{moment by mod iterating}
  \end{align}

  holds for all $1\leq i\leq L$ and $m\geq 0$ where $\tilde{Q}_i\in \C[z]/(z^{M+1})\pol{X\coprod Y}$ is given by
  \begin{align}
    \cf{\tilde{Q}_i}{F}=\pi(\cf{Q_i}{F}).
  \end{align}
  Therefore $\cf{\phi((zq)^*\odot P_{\mr{semi}})}{z^{m}}=\cf{P_{1,N}^{(\deg p)M}}{z^{m}}$ holds for any $1\leq m\leq M$, where

  \noindent
  $(P^{m})_{m\geq 0}\subset M_N(\C[z]/(z^{M+1}))$ is defined as

  \begin{align}
    P_{j,k}^0 &= 0 \textrm{ for each }1\leq j,k\leq N, \\
    P^{m+1}&=\sum_{i=1}^n(\mu_i(P^m+\tilde{I}_{N\times N}))^2 ~\textrm{ for }m\geq 0, \label{pas of zq hada P_semi mod}
  \end{align}

  where $\mu_i\in M_N(\C[z]/(z^{M+1}))$ is defined for any $1\leq i\leq n$ as $(\mu_i)_{j,k}=\pi(\mu(X_i)_{j,k})$ for each $1\leq j,k\leq N$ and $\tilde{I}_{N\times N} \in M_N(\C[z]/(z^{M+1}))$ is the multiplication unit of $M_N(\C[z]/(z^{M+1}))$.

  So we obtain $\tau(q(s_1,s_2,...,s_n)^m)=\cf{P_{1,N}^{(\deg p)M}}{z^{m}}$ for each $1\leq m\leq M$ by iterating \eqref{pas of zq hada P_semi mod} for $(\deg p)M$ times. Since once iteration of \eqref{pas of zq hada P_semi mod} requires $n$ sumation of matrices where each summand are produced by multiplication of size $N$ matrices, this takes $O(nN^3M^2)$ time because ordinal multiplication of two polynomials $p',q'\in\C[z]/(z^{M+1})$ takes $O(M^2)$ time. Totally, the complexity of this step is $O((\deg p)nN^3M^3)$ time since we iterate \eqref{pas of zq hada P_semi} for $(\deg p)M$ times.
\end{step}

\begin{step}[Calculate the output by binomial theorem] \label{bin}
  Finally we can calculate the output

  \noindent
  $\tau(p(s_1,s_2,...,s_n)^M)$ since this is equivalent to
  \begin{align}
    \sum_{k=0}^M{}_MC_kc^k\tau(q(s_1,s_2,...,s_n)^{M-k}), \label{binomial}
  \end{align}
  while we have already calculated $(\tau(q(s_1,s_2,...,s_n)^{k}))_{1\leq k\leq M}$ in Step \ref{iterate} and $c\in\C$ is obtained in Step \ref{split}.
\end{step}

The computational bottleneck of the overall procedures is Step \ref{iterate}. Therefore we can calculate the output in $O((\deg ~p) nL^3M^3)$ time, where $L$ is the minimum size $N$ of monoid homomorphism in \ref{monoid hom} of Proposition \ref{monoid hom assoced with rat} associated with $(zq)^*\in \C[z]_{\mr{rat}}^*\fps{X}$ where $q$ is obtained in Step \ref{split}. Since $L$ has an estimation $L\leq am_p(\textrm{deg}~p)+b$ for some $a,b>0$ in Remark \ref{estimation of matrix size}, the time complexity of this algorithm is $O((\deg ~p)^4 m_p^3nM^3)$.

\section{Acknowledgements}
\noindent The author would like to thank his supervisor, Professor Yasuyuki Kawahigashi for continuing support. He also thanks Tomohiro Hayase for continuing advice about free probability theory.

\bibliographystyle{abbrv}
\bibliography{thesis}

\end{document}